\newtheorem{thm}{Theorem}
\newtheorem{defn}[thm]{Definition}
\newtheorem{lem}[thm]{Lemma}
\newtheorem{cor}[thm]{Corollary}
\newtheorem{rem}[thm]{Remark}
\DeclareMathOperator{\modu}{mod}
\begin{document}
\begin{center}
\textbf{A REMARK ON RELATIVELY PRIME SETS}
\end{center}
\begin{center}
Prapanpong Pongsriiam\\
Department of Mathematics, Faculty of Science, Silpakorn University, Ratchamankanai Rd, Nakornpathom, Thailand, 73000.\\
 Email: prapanpong@gmail.com
\end{center}
\begin{center}
\textbf{Abstract}
\end{center}
Four functions counting the number of subsets of $\{1, 2, \ldots, n\}$ having particular properties are defined by Nathanson and generalized by many authors. They derive explicit formulas for all four functions. In this paper, we point out that we need to compute only one of them as the others will follow as a consequence. Moreover, our method is simpler and leads to more general results than those in the literature.\vspace{0.5cm}\\
\noindent \textit{Keywords}: Relatively prime sets, Euler phi function, combinatorial.\\
\noindent Mathematics Subject Classification 2000: 11A25, 11B25, 11B75.

\section{Introduction}

There are a number of articles concerning relatively prime subsets and Euler phi function for sets. Most of them show the calculation of explicit formulas for certain functions. Their main tools are the M$\ddot{\text{o}}$bius inversion formula and the inclusion-exclusion principle. In this paper, we give simpler and shorter calculations which lead to the results extending those in the literature. To be precise, we cover the results of Nathanson \cite{Na}, Nathanson and Orosz \cite{NO}, El Bachraoui \cite{Ba1}, \cite{Ba2}, \cite{Ba3}, \cite{Ba4}, \cite{Ba5}, El Bachraoui and Salim \cite{BS}, Ayad and Kihel \cite{AK1}, \cite{AK2}, and Shonhiwa \cite{Sh1}, \cite{Sh2}. We show how to apply our method to obtain all results mentioned above and their generalization. Now, let us introduce the following notations and definitions which will be used throughout this paper.\\ 

Unless stated otherwise, we let $a, b, k, m, n$ be positive integers, $\gcd(a, b)$ the greatest common divisor of $a$ and $b$, $[a,b] = \{ a, a+1, \ldots, b\}$, $A, X$ finite subsets of positive integers, $|A|$ the cardinality of the set $A$, $\gcd(A)$ the greatest common divisor of the elements of $A$, $\gcd(A,n)$ means $\gcd(A\cup\{n\})$, $\left\lfloor x\right\rfloor$ the greatest integer less than or equal to $x$, and $\mu$ the M$\ddot{\text{o}}$bius function.\\

\indent A nonempty finite subset $A$ of positive integers is said to be relatively prime if $\gcd(A) = 1$ and is said to be relatively prime to $n$ if $\gcd(A,n)=1$. The function counting the number of relatively prime subsets of $\{1, 2, \ldots, n\}$ and other related functions are defined by Nathanson \cite{Na} and generalized by many authors. We summarize them in the following definition.
\begin{defn}
Let $X$ be a nonempty finite subset of positive integers. Define $f(X)$ to be the number of relatively prime subsets of $X$, $f_k(X)$ the number of such subsets with cardinality $k$, $\Phi(X, n)$ the number of subsets $A$ of $X$ which is relatively prime to $n$ and $\Phi_k(X, n)$ the number of such subsets $A$ with $|A| = k$.
\end{defn}

Nathanson \cite{Na} first considered the case $X = [1,n] $. Using the M$\ddot{\text{o}}$bius inversion formula for functions of several variables, El Bachraoui \cite{Ba1}, \cite{Ba2}, Nathanson and Orosz \cite{NO} generalized the formulas to the case $X = [m, n]$ and $X = [1,m]$ (see details in the table). Then Ayad and Kihel \cite{AK1}, \cite{AK2} generalized all results mentioned above to the case of an arithmetic progression $X = \{a, a+b, \ldots, a+(m-1)b\}$ by using the inclusion-exclusion principle. In another direction, El Bachraoui and Salim \cite{BS} obtained the formulas for the case $X = [\ell_1,m_1]\cup[\ell_2,m_2]\cup \cdots \cup [\ell_k,m_k]$. In addition, Shonhiwa \cite{Sh1}, \cite{Sh2} and Toth \cite{To} gave results where various constraints are assumed. We summarize the development in the table below.
\begin{table}[h] 
\begin{center}
\begin{tabular}{|l|l|}
\hline
\quad\quad\quad \quad\quad Authors & Formulas for $f(X)$, $f_k(X)$, $\Phi(X, n)$, $\Phi_k(X, n)$\\\hline
Nathanson \cite{Na} & $X=[1,n]$\\\hline
El Bachraoui \cite{Ba1} &$X=[m,n]$\\
Nathanson and Orosz \cite{NO} &\\\hline
El Bachraoui \cite{Ba2} & $X=[1,m]$\\\hline
 &$X=\{a,a+b,a+(n-1)b\}$, \\
 Ayad and Kihel \cite{AK1}, \cite{AK2}&  $X=[\ell,m]$,\\
 &$X=\{a,a+b,a+(m-1)b\}$\\\hline
El Bachraoui \cite{Ba3}, \cite{Ba5} & $X = [1,m_1]\cup[\ell_2,m_2]$\\
El Bachraoui and Salim \cite{BS} & $X = [\ell_1,m_1]\cup[\ell_2,m_2]$\\
& $X = [\ell_1,m_1]\cup[\ell_2,m_2]\cup\cdots\cup [\ell_k,m_k]$\\\hline
Shonhiwa \cite{Sh1}, \cite{Sh2} & $X=[1,n]$ with various contraints\\\hline
Ayad and Kihel \cite{AK3}, & Different direction such as congruence  \\
El Bachraoui \cite{Ba4}, \cite{Ba5} & properties, divisor sum types, \\
Tang \cite{Ta}, Toth \cite{To} &combinatorial identities \\ \hline
\end{tabular}
\end{center}
\end{table}
\newpage
\indent In this paper, we give shorter and simpler calculations for these formulas. In Section 3, we show that we need only to derive the formula for $\Phi_k(X, n)$ as the others will follow as a consequence. This will cover the results in \cite{AK1}, \cite{AK2}, \cite{Ba1}, \cite{Ba2}, \cite{Na}, and \cite{NO}. In Section 4, we extend the formulas obtained by Ayad and Kihel \cite{AK1}, \cite{AK2}, by El Bachraoui \cite{Ba3}, \cite{Ba4}, \cite{Ba5}, and by El Bachraoui and Salim \cite{BS}. In Section 5, we show how our method can be used to obtain Shonhiwa's results \cite{Sh1}, \cite{Sh2} in a simpler and shorter way. We conclude this paper by giving a possible research related to the work of Ayad and Kihel \cite{AK3}, Tang \cite{Ta}, and Toth \cite{To}. 
\section{Lemmas}
In this section, we give a formula for the number of terms in an arithmetic progression which are divisible by a fixed positive integer.
\begin{lem}\label{newlemma2}
For integers $d, m\geq 1$ and nonzero integers $a$ and $b$, let $A = \{a, a+b,\ldots, a+(m-1)b\}$ be an $m$-arithmetic progression, $A_d = \{x\in A\;:\;d\mid x\}$, and $k = \gcd (d,b)$. Then 
\begin{itemize}
\item [(i)] If $k\nmid a$, then $|A_d| = 0$. 
\item [(ii)] If $k\mid a$, then $\displaystyle |A_d| = \left\lfloor \frac{mk}{d}\right\rfloor+\varepsilon_d$ where\\
$\displaystyle \varepsilon_d = 
\begin{cases}
1,\quad& \text{if $d\nmid mk$ and $-\frac{a}{k}\left(\frac{b}{k}\right)^{-1}\modu \frac{d}{k}$}\in\left\{0, 1,\ldots, m-1-\left\lfloor \frac{(m-1)k}{d}\right\rfloor\frac{d}{k}\right\}\\
&\text{where $\left(\frac{b}{k}\right)^{-1}$ is the multiplication inverse of $\frac{b}{k}$ modulo $\frac{d}{k}$.}\\
0,\quad& \text{otherwise}
\end{cases}$
\end{itemize}
\end{lem}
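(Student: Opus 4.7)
\bigskip

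\noindent\textbf{Proof plan.} The count $|A_d|$ is exactly the number of $j\in\{0,1,\ldots,m-1\}$ for which $d\mid a+jb$, so the plan is to convert this divisibility condition into a single linear congruence modulo $d/k$, then count the solutions in a window of length $m$.

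For (i), if $d\mid a+jb$ for some $j$, then since $k=\gcd(d,b)$ divides both $d$ and $b$, it divides $(a+jb)-jb=a$. Hence $k\nmid a$ forces $|A_d|=0$. For (ii), I would write $a=ka'$, $b=kb'$, $d=kd'$ with $\gcd(b',d')=1$, observe that $d\mid a+jb\iff d'\mid a'+jb'$, and (since $b'$ is invertible mod $d'$) rewrite this as
\[
j\equiv c\pmod{d'},\qquad c:=-a'(b')^{-1}\bmod d'\in\{0,1,\ldots,d'-1\}.
\]
Thus $|A_d|$ equals the number of integers in $\{0,1,\ldots,m-1\}$ lying in the residue class $c$ modulo $d'$.

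The remaining step is the elementary count of $\{0,1,\ldots,m-1\}\cap(c+d'\mathbb{Z})$. Writing $m=qd'+r$ with $0\le r<d'$, the arithmetic progression $c,c+d',c+2d',\ldots$ contributes one element for each $t\ge 0$ with $c+td'\le m-1$, and a short case split gives $q+1$ solutions when $c\le r-1$ and $q$ solutions when $c\ge r$ (or simply $q$ solutions when $r=0$). Since $q=\lfloor m/d'\rfloor=\lfloor mk/d\rfloor$, this produces the principal term of the formula, with $\varepsilon_d=1$ precisely in the first subcase.

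It then remains only to check that the two conditions I get match those stated in the lemma: "$r\ge 1$" is the same as $d'\nmid m$, i.e.\ $d\nmid mk$; and "$c\le r-1$" is the same as $c\in\{0,1,\ldots,m-1-\lfloor(m-1)/d'\rfloor d'\}$, because when $r\ge1$ one has $m-1=qd'+(r-1)$ with $0\le r-1<d'$, so $\lfloor(m-1)/d'\rfloor=q$ and $m-1-qd'=r-1$. Multiplying numerator and denominator by $k$ converts $d'$ into $d/k$ and $\lfloor(m-1)/d'\rfloor$ into $\lfloor(m-1)k/d\rfloor$, giving exactly the bracketing set in the statement. No step is really an obstacle; the only place one must be slightly careful is this indexing translation, and in verifying that when $r=0$ the set $\{0,1,\ldots,-1\}$ is empty so the formula still holds vacuously.
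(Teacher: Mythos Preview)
Your proof is correct and follows essentially the same route as the paper's: reduce $|A_d|$ to the count of $j\in\{0,\ldots,m-1\}$ lying in a fixed residue class modulo $d'=d/k$, then count by partitioning into blocks of length $d'$ (the paper) or, equivalently, via the division $m=qd'+r$ (you). One small slip in your final remark: when $r=0$ the set $\{0,\ldots,m-1-\lfloor(m-1)/d'\rfloor d'\}$ is actually $\{0,\ldots,d'-1\}$, not empty---but this is harmless, since the conjunct $d\nmid mk$ already fails in that case and forces $\varepsilon_d=0$.
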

\begin{proof}
From the definition of $A$ and $A_d$, we see that $|A_d|$ is equal to the number of $x\in\{0,1,2,\ldots,m-1\}$ such that $a+xb\equiv 0(\modu d)$. So we consider the congruence
\begin{equation}\label{conguenceeq1}
bx \equiv -a(\modu d)
\end{equation}
If $k$ does not divide $a$, then there is no $x$ satisfying (\ref{conguenceeq1}) and thus $|A_d|=0$. This proves (i). Next, we assume that $k\mid a$. Then (\ref{conguenceeq1}) becomes
\begin{equation}\label{conguenceeq2}
\frac{b}{k}x \equiv -\frac{a}{k}\left(\modu \frac{d}{k}\right)
\end{equation}
Since $k=(d,b)$, $\left(\frac{d}{k},\frac{b}{k}\right) = 1$. So (\ref{conguenceeq2}) has a unique solution $\modu \frac{d}{k}$ which is 
\begin{equation}\label{conguenceeq3new}
x \equiv  -\frac{a}{k}\left(\frac{b}{k}\right)^{-1}\left(\modu \frac{d}{k}\right)
\end{equation}
where $\left(\frac{b}{k}\right)^{-1}$ is the multiplication inverse of $\frac{b}{k}$ modulo $\frac{d}{k}$.\\
So we want to count the number of elements in the set $\{0,1,2,\ldots,m-1\}$ which satisfy (\ref{conguenceeq3new}). Each of the following sets contain a unique element satisfying (\ref{conguenceeq3new}) \vspace{0.3cm}

$$
\left\{0,1,\ldots,\frac{d}{k}-1\right\},\left\{\frac{d}{k},\frac{d}{k}+1,\ldots,\frac{2d}{k}-1\right\},\ldots,\left\{\left(\left\lfloor \frac{mk}{d}\right\rfloor-1\right)\frac{d}{k},\ldots,\left\lfloor \frac{mk}{d}\right\rfloor\frac{d}{k}-1\right\}.\vspace{0.3cm}
$$

There are $\left\lfloor \frac{mk}{d}\right\rfloor$ sets. This implies that $|A_d| = \left\lfloor \frac{mk}{d}\right\rfloor+\varepsilon_d$ where $\varepsilon_d=1$ if $\left\lfloor \frac{mk}{d}\right\rfloor\frac{d}{k}-1<m-1$ and the set $\left\{\left\lfloor \frac{mk}{d}\right\rfloor\frac{d}{k},\left\lfloor \frac{mk}{d}\right\rfloor\frac{d}{k}+1,\ldots,m-1\right\}$ contains an element satisfying (\ref{conguenceeq3new}), otherwise $\varepsilon_d=0$. \\
It is easy to see that $\left\lfloor \frac{mk}{d}\right\rfloor\frac{d}{k}-1<m-1$ if and only if $d\nmid mk$, we also see that $\left\lfloor \frac{mk}{d}\right\rfloor\frac{d}{k}$, $\left\lfloor \frac{mk}{d}\right\rfloor\frac{d}{k}+1,\ldots,m-1$ are congruent  to $0, 1, 2,\ldots, m-1-\left\lfloor \frac{(m-1)k}{d}\right\rfloor\frac{d}{k}$ modulo $\frac{d}{k}$, respectively. Hence
$$
\varepsilon_d = 
\begin{cases}
1,\quad& \text{if $d\nmid mk$ and $-\frac{a}{k}\left(\frac{b}{k}\right)^{-1}\modu \frac{d}{k}$}\in\left\{0, 1,\ldots, m-1-\left\lfloor \frac{(m-1)k}{d}\right\rfloor\frac{d}{k}\right\}\\
0,\quad& \text{otherwise}.
\end{cases}
$$
This completes the proof.
\end{proof}
If we consider the case $\gcd(a,b)=1$, we obtain a lemma of Ayad and Kihel as a corollary. We record it in the next lemma.
\begin{lem}\label{lem2}
\cite{AK2} For an integer $d, m\geq 1$, and for nonzero integers $a$ and $b$ with $\gcd(a,b)=1$, let $X = \{a, a+b, \ldots, a+(m-1)b\}$ be an $m$-arithmetic progression, $A_d = \{x\in X\;:\;d\mid x\}$, and $k=\gcd(d,b)$. Then 
\begin{itemize}
\item [i)] If $k\neq 1$, then $|A_d| = 0$.
\item [ii)] If $k=1$, then $|A_d| = \left\lfloor \frac{m}{d}\right\rfloor+\varepsilon_d$ where
$$
\varepsilon_d = 
\begin{cases}
1&\quad\text{if}\;d\nmid m\;\text{and}\;\left(-ab^{-1}\right)\modu d\in\{0,\ldots,m-\left\lfloor \frac{m}{d}\right\rfloor d-1\},\\
&\quad\text{where $b^{-1}$ is the multiplication inverse of $b$ modulo $d$.}\\
0&\quad\text{otherwise}.
\end{cases}
$$ 
\end{itemize}
\end{lem}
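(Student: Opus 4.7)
The plan is to derive Lemma \ref{lem2} as an immediate specialization of Lemma \ref{newlemma2} under the extra hypothesis $\gcd(a,b)=1$, so no fresh counting argument is needed.

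For part (i), I would suppose $k=\gcd(d,b)\ge 2$. Since $k\mid b$ and $\gcd(a,b)=1$, any common divisor of $a$ and $k$ would divide both $a$ and $b$, forcing $\gcd(a,k)=1$. In particular $k\nmid a$, so Lemma \ref{newlemma2}(i) gives $|A_d|=0$.

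For part (ii), when $k=1$ the hypothesis $k\mid a$ of Lemma \ref{newlemma2}(ii) holds automatically, and the conclusion specializes as follows: $\left\lfloor mk/d\right\rfloor=\left\lfloor m/d\right\rfloor$, the inverse $(b/k)^{-1}\pmod{d/k}$ becomes $b^{-1}\pmod d$, and the divisibility condition $d\nmid mk$ becomes $d\nmid m$. Thus $|A_d|=\left\lfloor m/d\right\rfloor+\varepsilon_d$ with $\varepsilon_d=1$ exactly when $d\nmid m$ and $(-ab^{-1})\bmod d$ lies in $\{0,1,\ldots,m-1-\left\lfloor (m-1)/d\right\rfloor d\}$.

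The only nontrivial check is that this residue range agrees with the range $\{0,\ldots,m-\left\lfloor m/d\right\rfloor d -1\}$ appearing in Lemma \ref{lem2}. I would handle this by writing $m=qd+r$ with $1\le r\le d-1$, which is legitimate because the whole discussion of $\varepsilon_d$ presupposes $d\nmid m$. Then $\left\lfloor m/d\right\rfloor=q$ and $m-\left\lfloor m/d\right\rfloor d-1=r-1$; on the other hand $m-1=qd+(r-1)$ with $0\le r-1<d$, so $\left\lfloor (m-1)/d\right\rfloor=q$ as well, giving $m-1-\left\lfloor (m-1)/d\right\rfloor d=r-1$. The two endpoints coincide, and the proof is complete. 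I do not anticipate any real obstacle; the whole argument is a short verification that the $k=1$ instance of the earlier lemma is exactly the statement claimed here.
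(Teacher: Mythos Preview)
Your proposal is correct and follows essentially the same route as the paper: both derive the result directly from Lemma~\ref{newlemma2} by noting that under $\gcd(a,b)=1$ one has $k\mid a$ if and only if $k=1$, and then verifying that the residue range from Lemma~\ref{newlemma2} coincides with the one stated here via the identity $\left\lfloor m/d\right\rfloor=\left\lfloor (m-1)/d\right\rfloor$ when $d\nmid m$. Your division-with-remainder check of this last point is just a slightly more explicit form of the paper's one-line observation.
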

\begin{proof}
Since $\gcd(a,b)=1$, we see that $k=1$ if and only if $k\mid a$. So if $k\neq 1$, then $|A_d|=0$ and if $k=1$, then $|A_d| = \left\lfloor \frac{m}{d}\right\rfloor+\varepsilon_d$, by Lemma \ref{newlemma2}. Notice also that $\left\lfloor \frac{m}{d}\right\rfloor = \left\lfloor \frac{m}{d}-\frac{1}{d}\right\rfloor$ if and only if $d\nmid m$. So the conditions determining $\varepsilon_d$ in Lemma \ref{newlemma2} and Lemma \ref{lem2} are the same. This completes the proof.
\end{proof}
The next lemma will be used throughout this paper.
\begin{lem}\label{lem6mu}
\quad$\displaystyle \sum_{d\mid n}\mu(d) = 
\begin{cases}
1\quad\text{if}\;n=1\\
0\quad\text{if}\;n>1.
\end{cases}$
\end{lem}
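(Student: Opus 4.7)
The plan is to handle the case $n=1$ separately and then treat $n>1$ by reducing the sum to one over squarefree divisors, which becomes a binomial identity.

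First I would dispose of $n=1$: the only divisor is $d=1$, and since $\mu(1)=1$ the sum equals $1$ by direct inspection. Now assume $n>1$ and write its prime factorization as $n=p_1^{a_1}p_2^{a_2}\cdots p_r^{a_r}$ with $r\geq 1$. By the definition of $\mu$, any divisor $d$ of $n$ that is divisible by $p_i^2$ for some $i$ satisfies $\mu(d)=0$ and so contributes nothing. Therefore the sum $\sum_{d\mid n}\mu(d)$ reduces to a sum over squarefree divisors of $n$, i.e., divisors of the form $d=p_{i_1}p_{i_2}\cdots p_{i_j}$ where $\{i_1,\ldots,i_j\}$ ranges over subsets of $\{1,2,\ldots,r\}$.

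For each such squarefree $d$ with exactly $j$ prime factors we have $\mu(d)=(-1)^j$, and the number of subsets of $\{1,\ldots,r\}$ of size $j$ is $\binom{r}{j}$. Thus
$$\sum_{d\mid n}\mu(d)=\sum_{j=0}^{r}\binom{r}{j}(-1)^j=(1-1)^r=0,$$
by the binomial theorem, which completes the proof.

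There is really no obstacle here: the identity is classical and the only minor care needed is to note that non-squarefree divisors contribute zero, so that the sum collapses to the binomial expansion of $(1-1)^r$. An alternative route would be to observe that $F(n)=\sum_{d\mid n}\mu(d)=(\mu\ast 1)(n)$ is the Dirichlet convolution of two multiplicative functions and hence multiplicative, then verify $F(p^a)=1+\mu(p)=0$ for every prime power $p^a$ with $a\geq 1$; but the binomial argument above is more self-contained and shorter.
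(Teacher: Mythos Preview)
Your proof is correct and is in fact the standard argument. The paper itself does not give a proof of this lemma at all; it simply cites Apostol's textbook with the remark that the result is well known. So you have supplied strictly more than the paper does, and the binomial argument you wrote is essentially the one found in the cited reference.
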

\begin{proof}
This is a well-known result. For the proof see, for example, (\cite{Ap}, p.25).
\end{proof}
\section{Only One Formula Is Enough}
In this section, we give a simple proof of the formula for $\Phi_k^{(a,b)}(m,n)$ and show that the formulas for $\Phi^{(a,b)}(m,n)$, $f_k^{(a,b)}(m)$, and $f^{(a,b)}(m)$ can be obtained as a consequence. In the notation used in \cite{AK1}, \cite{AK2}, $f^{(a,b)}(m)$, $f_k^{(a,b)}(m)$, $\Phi^{(a,b)}(m,n)$ and $\Phi_k^{(a,b)}(m,n)$ are $f(X)$, $f_k(X)$, $\Phi(X)$, and $\Phi_k(X)$, respectively, where $X = \{a, a+b,\ldots, a+(m-1)b\}$. The following are the results obtained by Ayad and Kihel in \cite{AK1} and \cite{AK2}.
\begin{thm}\label{AK2thm1}
\cite{AK2} For all positive integers $m$, $a$ and $b$, let $f^{(a,b)}(m)$ denote the number of relatively prime subsets of $\{a,a+b,\ldots,a+(m-1)b\}$ and $f_k^{(a,b)}(m)$ denote of the number of relatively prime subsets of $\{a,a+b,\ldots,a+(m-1)b\}$ of cardinality $k$. Suppose that $\gcd(a,b) = 1$. Then 
\begin{align*}
f^{(a,b)}(m) &= \sum_{\substack{d=1\\\gcd(d,b)=1}}^{a+(m-1)b} \mu(d)\left(2^{\left\lfloor m/d\right\rfloor+\varepsilon_d}-1\right)\quad\text{and}\\
f_k^{(a,b)}(m) &= \sum_{\substack{d=1\\\gcd(d,b)=1}}^{a+(m-1)b} \mu(d){\left\lfloor m/d\right\rfloor+\varepsilon_d \choose k}
\end{align*}
where $\varepsilon_d$ is the function defined in Lemma \ref{lem2}.\\
If $\gcd(a,b)\neq 1$, it is easy to see that $f^{(a,b)}(m) = f^{(a,b)}_k(m) = 0$.
\end{thm}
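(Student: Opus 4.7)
The plan is to prove the formula for $f_k^{(a,b)}(m)$ directly by a Möbius inversion argument, and then derive the formula for $f^{(a,b)}(m)$ by summing over $k$. Write $X = \{a, a+b, \ldots, a+(m-1)b\}$ and $A_d = \{x \in X : d\mid x\}$ as in Lemma \ref{lem2}.

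First I would use Lemma \ref{lem6mu} to rewrite the indicator $[\gcd(A)=1]$ as $\sum_{d\mid \gcd(A)}\mu(d)$. Swapping the order of summation,
$$f_k^{(a,b)}(m) \;=\; \sum_{\substack{A\subseteq X\\|A|=k}}\sum_{d\mid\gcd(A)}\mu(d) \;=\; \sum_{d\geq 1}\mu(d)\,\#\{A\subseteq X:|A|=k,\;d\mid\gcd(A)\}.$$
Since $d\mid\gcd(A)$ iff $A\subseteq A_d$, the inner count is $\binom{|A_d|}{k}$. For $d>a+(m-1)b$ we have $A_d=\emptyset$, so the sum is supported on $1\leq d\leq a+(m-1)b$. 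Applying Lemma \ref{lem2}, $|A_d|=0$ whenever $\gcd(d,b)\neq 1$ (since $\gcd(a,b)=1$ forces $k\nmid a$ in that case), so those terms vanish; otherwise $|A_d|=\lfloor m/d\rfloor+\varepsilon_d$. This yields exactly the stated formula for $f_k^{(a,b)}(m)$.

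Next I would sum over $k$: since $\sum_{k=1}^{|A_d|}\binom{|A_d|}{k}=2^{|A_d|}-1$ (and the contribution of $k=0$ corresponds to the empty set, which is excluded from the definition of $f$), interchanging the finite sums gives
$$f^{(a,b)}(m)\;=\;\sum_{k=1}^{m}f_k^{(a,b)}(m)\;=\;\sum_{\substack{d=1\\\gcd(d,b)=1}}^{a+(m-1)b}\mu(d)\bigl(2^{\lfloor m/d\rfloor+\varepsilon_d}-1\bigr),$$
as required. The case $\gcd(a,b)\neq 1$ is immediate: every element of $X$ is divisible by $\gcd(a,b)>1$, so no nonempty subset of $X$ can have gcd equal to $1$, giving $f^{(a,b)}(m)=f_k^{(a,b)}(m)=0$.

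There is no serious obstacle here; the whole argument is a clean application of Lemmas \ref{lem2} and \ref{lem6mu}. The only points requiring a small amount of care are (i) justifying the truncation of the outer sum at $d\leq a+(m-1)b$, which follows from $A_d=\emptyset$ beyond that bound, and (ii) checking that the $\gcd(d,b)\neq 1$ terms drop out, which is exactly part (i) of Lemma \ref{lem2}. The interchange of sums is legitimate because everything in sight is a finite sum.
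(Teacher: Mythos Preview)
Your proof is correct and uses the same core technique as the paper: apply Lemma~\ref{lem6mu} to rewrite the coprimality condition, swap sums, reduce to counting $k$-subsets of $A_d$, and invoke Lemma~\ref{lem2} for $|A_d|$. The only difference is organizational: the paper first establishes the formula for $\Phi_k^{(a,b)}(m,n)$ and then specializes $n=(a+(m-1)b)!$ to recover $f_k^{(a,b)}(m)$ (in keeping with its thesis that the single formula for $\Phi_k$ suffices), whereas you apply the M\"obius argument directly to $f_k$; the underlying computation is identical.
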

\begin{thm}\label{AK1thm3}
\cite{AK1} For positive integers $m, a,$ and $b$, let $\Phi^{(a,b)}(m,n)$ be the number of nonempty subsets of $\{a+,a+b,\ldots,a+(m-1)b\}$ which are relatively prime to $n$ and let $\Phi_k^{(a,b)}(m,n)$ be the number of such subsets of cardinality $k$. Suppose that $\gcd(a,b) = 1$. Then 
\begin{align*}
\Phi^{(a,b)}(m,n)&=\sum_{\substack{d\mid n\\\gcd(b,d)=1}}\mu(d)\left(2^{\left\lfloor \frac{m}{d}\right\rfloor +\varepsilon_d}-1\right)\quad\text{and}\\
\Phi_k^{(a,b)}(m,n)&=\sum_{\substack{d\mid n\\\gcd(b,d)=1}}\mu(d){\left\lfloor \frac{m}{d}\right\rfloor +\varepsilon_d\choose k}
\end{align*}
where $\varepsilon_d$ is the function defined in Lemma \ref{lem2}
\end{thm}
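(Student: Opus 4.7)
The plan is to prove the formula for $\Phi_k^{(a,b)}(m,n)$ first by Möbius inversion over divisors of $n$, and then derive $\Phi^{(a,b)}(m,n)$ by summing $\Phi_k^{(a,b)}(m,n)$ over $k \geq 1$. The key observation is that for any subset $A \subseteq X$, we have $\gcd(A,n) = \gcd(\gcd(A), n)$, so by Lemma \ref{lem6mu},
\begin{equation*}
[\gcd(A,n)=1] \;=\; \sum_{d \mid \gcd(A,n)} \mu(d) \;=\; \sum_{\substack{d \mid n \\ d \mid x\ \forall x \in A}} \mu(d).
\end{equation*}

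Summing this indicator over all $k$-subsets of $X$ and swapping the order of summation gives
\begin{equation*}
\Phi_k^{(a,b)}(m,n) \;=\; \sum_{d \mid n} \mu(d) \, \bigl|\{A \subseteq X : |A| = k,\ d \mid x\text{ for all }x \in A\}\bigr| \;=\; \sum_{d \mid n} \mu(d) \binom{|A_d|}{k},
\end{equation*}
where $A_d = \{x \in X : d \mid x\}$ in the notation of Lemma \ref{lem2}. Since $\gcd(a,b)=1$, Lemma \ref{lem2} tells us that $|A_d| = 0$ whenever $\gcd(d,b) \neq 1$ (these terms drop out, as $\binom{0}{k}=0$ for $k \geq 1$), while $|A_d| = \lfloor m/d \rfloor + \varepsilon_d$ whenever $\gcd(d,b) = 1$. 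Substituting yields the claimed formula for $\Phi_k^{(a,b)}(m,n)$.

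For the formula for $\Phi^{(a,b)}(m,n)$, I would sum the identity just proved over $k = 1, 2, \ldots, m$, interchange the sums again, and use the elementary identity $\sum_{k=1}^{N} \binom{N}{k} = 2^N - 1$ applied with $N = \lfloor m/d\rfloor + \varepsilon_d$. This immediately gives
\begin{equation*}
\Phi^{(a,b)}(m,n) \;=\; \sum_{\substack{d \mid n \\ \gcd(d,b)=1}} \mu(d)\bigl(2^{\lfloor m/d\rfloor + \varepsilon_d} - 1\bigr).
\end{equation*}

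There is no real obstacle here: once Lemma \ref{lem2} is in hand, the whole proof is a straightforward application of Möbius inversion plus a sum-swap. The only point requiring a line of care is noting that the divisibility condition $d \mid x$ for every $x \in A$ (together with $d \mid n$) is exactly the condition $d \mid \gcd(A,n)$, so the indicator identity above is valid. Everything else, including the restriction of the summation to divisors $d$ of $n$ with $\gcd(d,b)=1$, is forced automatically by Lemma \ref{lem2}.
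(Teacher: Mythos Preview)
Your proposal is correct and follows essentially the same approach as the paper: apply Lemma~\ref{lem6mu} to rewrite the indicator $[\gcd(A,n)=1]$ as a M\"obius sum over $d\mid\gcd(A,n)$, swap the order of summation to get $\sum_{d\mid n}\mu(d)\binom{|A_d|}{k}$, invoke Lemma~\ref{lem2} to discard the terms with $\gcd(d,b)\neq 1$ and substitute $|A_d|=\lfloor m/d\rfloor+\varepsilon_d$, and then sum over $k$ using $\sum_{k\geq 1}\binom{N}{k}=2^N-1$ to obtain $\Phi^{(a,b)}(m,n)$. The paper's proof is identical in structure and detail.
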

\begin{cor}\label{cor4}
(\cite{AK1}, \cite{AK2}) The formulas for $f(m,k)$, $f_k(m,\ell)$, $\Phi(m,\ell)$, $\Phi_k(m,\ell)$ $\Phi([1,m],n)$, and $\Phi_k([1,m],n)$ obtained in \cite{Na}, \cite{Ba1}, \cite{NO}, and \cite{Ba2} are consequences of Theorem \ref{AK2thm1} and Theorem \ref{AK1thm3}.
\end{cor}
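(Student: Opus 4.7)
The plan is to recover each of the cited formulas by specializing Theorems \ref{AK2thm1} and \ref{AK1thm3} to the appropriate arithmetic progression. For the results of Nathanson \cite{Na} and El Bachraoui \cite{Ba2} on $X=[1,m]$, I would take $a=1$ and $b=1$ so that $\{a,a+b,\ldots,a+(m-1)b\}=[1,m]$. For the results of El Bachraoui \cite{Ba1} and Nathanson and Orosz \cite{NO} on $X=[m,n]$, I would take $a=m$ and $b=1$ and replace the parameter $m$ appearing in our formulas by the number of terms $M:=n-m+1$. In both situations $b=1$ automatically gives $\gcd(a,b)=1$, so Theorems \ref{AK2thm1} and \ref{AK1thm3} apply, and the side condition $\gcd(b,d)=1$ in the sums is vacuous; also $k=\gcd(d,b)=1$, so only the simplified branch of $\varepsilon_d$ in Lemma \ref{lem2} must be analysed.

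Next I would verify that in each specialization, $\lfloor m/d\rfloor+\varepsilon_d$ equals the count of multiples of $d$ in the relevant interval, which is the shape in which the formulas appear in \cite{Na}, \cite{Ba1}, \cite{Ba2}, and \cite{NO}. For $X=[1,m]$ we have $-ab^{-1}\equiv d-1\pmod{d}$, while $m-\lfloor m/d\rfloor d\le d-1$, so the membership condition defining $\varepsilon_d$ fails and $\varepsilon_d=0$. Hence $\lfloor m/d\rfloor+\varepsilon_d=\lfloor m/d\rfloor$, and substituting into Theorems \ref{AK2thm1} and \ref{AK1thm3} immediately yields Nathanson's $f([1,m])$ and $f_k([1,m])$ as well as El Bachraoui's $\Phi([1,m],n)$ and $\Phi_k([1,m],n)$.

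For $X=[m,n]$, with $a=m$, $b=1$, and $M=n-m+1$, I would establish the identity
$$\left\lfloor\frac{M}{d}\right\rfloor+\varepsilon_d=\left\lfloor\frac{n}{d}\right\rfloor-\left\lfloor\frac{m-1}{d}\right\rfloor,$$
which is exactly the number of multiples of $d$ in $[m,n]$. This is an elementary case analysis: writing $m-1=qd+r$ with $0\le r<d$, one checks that $\varepsilon_d=1$ precisely when the boundary residue $(-m)\bmod d$ lies in $\{0,1,\ldots,M-\lfloor M/d\rfloor d-1\}$, and this happens exactly when the first multiple of $d$ that is $\ge m$ lies in $[m,n]$ but is not already counted by $\lfloor M/d\rfloor$. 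Plugging the resulting expression into Theorems \ref{AK2thm1} and \ref{AK1thm3} yields the formulas of \cite{Ba1} and \cite{NO} for $f([m,n])$, $f_k([m,n])$, $\Phi([m,n],\ell)$, and $\Phi_k([m,n],\ell)$.

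The only real obstacle is the bookkeeping identity relating $\varepsilon_d$ to the boundary residue $(-m)\bmod d$ in the $[m,n]$ case; once that is verified, all of the other formulas listed in Corollary \ref{cor4} follow by direct substitution into Theorems \ref{AK2thm1} and \ref{AK1thm3}.
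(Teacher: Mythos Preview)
Your proposal is sound, but note that the paper does not actually supply its own proof of Corollary~\ref{cor4}. The corollary is stated with a citation to \cite{AK1}, \cite{AK2}, and the \texttt{proof} environment that follows it is explicitly announced as ``a proof of the formula for $\Phi_k^{(a,b)}(m,n)$'' together with the derivation of $\Phi^{(a,b)}$, $f_k^{(a,b)}$, and $f^{(a,b)}$ from it---in other words, it proves Theorems~\ref{AK2thm1} and~\ref{AK1thm3}, not the corollary. The specialization of those theorems to $b=1$ is simply taken for granted as already carried out by Ayad and Kihel.

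Your argument fills that gap correctly. One small simplification: for the $[m,n]$ case you do not need a separate ``bookkeeping'' case analysis to establish
\[
\left\lfloor\frac{M}{d}\right\rfloor+\varepsilon_d=\left\lfloor\frac{n}{d}\right\rfloor-\left\lfloor\frac{m-1}{d}\right\rfloor.
\]
By the very statement of Lemma~\ref{lem2}, the quantity $\lfloor M/d\rfloor+\varepsilon_d$ equals $|A_d|$, the number of multiples of $d$ in the progression $\{m,m+1,\ldots,n\}$, and that count is exactly $\lfloor n/d\rfloor-\lfloor(m-1)/d\rfloor$. So the identity is immediate and the residue analysis can be omitted.
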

Now we will give a proof of the formula for $\Phi_k^{(a,b)}(m,n)$ and show that the other formulas follow as a consequence.
\begin{proof}
Let $X = \{a, a+b, \ldots, a+(m-1)b\}$ and for each $d$, we let $A_d = \{x\in X\;:\;d\mid x\}$. Then the definition of $\Phi_k^{(a,b)}(m,n)$ can be written as 
$$
\Phi_k^{(a,b)}(m,n) = \sum_{\substack{\emptyset \neq A \subseteq X\\|A| = k\\\gcd(A,n)=1}} 1.
$$
Now we capture the condition $\gcd(A,n)=1$ by Lemma \ref{lem6mu} and write
$$
\Phi_k^{(a,b)}(m,n) = \sum_{\substack{\emptyset \neq A \subseteq X\\|A| = k}}\sum_{d\mid \gcd(A,n)} \mu(d) = \sum_{\substack{\emptyset \neq A \subseteq X\\|A| = k}}\sum_{\substack{d\mid \gcd(A)\\d\mid n}} \mu(d) 
$$
Changing the order of summation, the above sum becomes
$$
\sum_{d\mid n} \mu(d) \sum_{\substack{\emptyset \neq A \subseteq X\\|A| = k\\ d\mid \gcd(A)}}1.
$$
$d\mid\gcd(A)$ if and only if $d$ divides all elements of $A$. So the condition $\emptyset \neq A \subseteq X$ and $d\mid \gcd(A)$ is equivalent to $\emptyset \neq A \subseteq A_d$. Hence the above sum is equal to 
$$
\sum_{d\mid n} \mu(d) \sum_{\substack{\emptyset \neq A \subseteq A_d\\|A| = k}}1.
$$
By Lemma \ref{lem2}, $|A_d|=0$ if $\gcd(d,b)\neq 1$. So for nonzero contribution, we can restrict our attention to the case $\gcd(d,b)=1$. Therefore the above sum is equal to 
\begin{equation}\label{eq67}
\sum_{\substack{d\mid n\\\gcd(d,b)=1}} \mu(d) \sum_{\substack{\emptyset \neq A \subseteq A_d\\|A| = k}}1 = \sum_{\substack{d\mid n\\\gcd(d,b)=1}} \mu(d){|A_d|\choose k}.
\end{equation}
Applying Lemma \ref{lem2} again, we substitute $|A_d| = \left\lfloor \frac{m}{d}\right\rfloor+\varepsilon_d$ to get
$$
\Phi_k^{(a,b)}(m,n) = \sum_{\substack{d\mid n\\\gcd(d,b)=1}} \mu(d){\left\lfloor m/d\right\rfloor+\varepsilon_d\choose k}.
$$
To obtain the formula of $\Phi^{(a,b)}(m,n)$, we use the well-known identity that
$$
\sum_{k=1}^n {n\choose k} = 2^n-1.
$$
This can also be written as $\displaystyle \sum_{k=1}^\infty {n\choose k} = 2^n-1$ since ${n\choose k}=0$ when $k>n$. Now by the definition of $\Phi^{(a,b)}(m,n)$, we have
\begin{align*}
\Phi^{(a,b)}(m,n) &= \sum_{k=1}^m\Phi_k^{(a,b)}(m,n) = \sum_{k=1}^m\sum_{\substack{d\mid n\\\gcd(d,b)=1}} \mu(d) {|A_d|\choose k}\\
&= \sum_{\substack{d\mid n\\\gcd(d,b)=1}} \mu(d) \sum_{k=1}^m{|A_d|\choose k}\\
&= \sum_{\substack{d\mid n\\\gcd(d,b)=1}} \mu(d)\left(2^{|A_d|}-1\right) \\
&= \sum_{\substack{d\mid n\\\gcd(d,b)=1}} \mu(d)\left(2^{\left\lfloor m/d\right\rfloor+\varepsilon_d}-1\right),\quad\text{as required.}
\end{align*}
Next, we put $n = (a+(m-1)b)!$ in the formula of $\Phi_k^{(a,b)}(m,n)$. For a nonempty subset $A$ of $\{a, a+b,\ldots,a+(m-1)b\}$, we have $\gcd(A,n)=1$ if and only if $\gcd(A)=1$. Therefore by the definition of $\Phi_k^{(a,b)}(m,n)$ and $f_k^{(a,b)}(m)$, we have
\begin{equation}\label{eq1.1}
\Phi_k^{(a,b)}(m,n) = f_k^{(a,b)}(m).
\end{equation}
On the other hand, we have from (\ref{eq67}) that
$$
\Phi_k^{(a,b)}(m,n) = \sum_{\substack{d\mid n\\\gcd(d,b)=1}} \mu(d) {|A_d|\choose k}.
$$
Notice that $d = 1, 2, \ldots, a+(m-1)b$ are divisors of $n$ and if $d> a+(m-1)b$, then $d$ is larger than all elements of $A$ and thus $|A_d|=0$. Therefore the above sum is 
\begin{equation}\label{eq1.2}
\Phi_k^{(a,b)}(m,n) = \sum_{\substack{d=1\\\gcd(d,b)=1}}^{a+(m-1)b} \mu(d){|A_d|\choose k}
\end{equation}
From (\ref{eq1.1}), (\ref{eq1.2}) and Lemma \ref{lem2}, we obtain
$$
f_k^{(a,b)}(m) = \sum_{\substack{d=1\\\gcd(d,b)=1}}^{a+(m-1)b} \mu(d){|A_d|\choose k} = \sum_{\substack{d=1\\\gcd(d,b)=1}}^{a+(m-1)b} \mu(d){\left\lfloor m/d\right\rfloor+\varepsilon_d \choose k}.
$$ 
Similar to the proof of $\Phi^{(a,b)}(m,n)$, we sum $f_k^{(a,b)}(m)$ over all $k$ to get $f^{(a,b)}(m)$. This completes the proof.
\end{proof} 
\begin{rem}
As noted by Ayad and Kihel in \cite{AK2} that we can easily deduced from Theorem \ref{AK1thm3} the results for the case when $a$ and $b$ are integers not necessary positive, or the case $((a,b),n)\neq 1$ or $(a,b)\neq 1$ but $((a,b),n) = 1$. For the details, see Remark 11 and Remark 12 in \cite{AK2}. Combining this with Corollary \ref{cor4}, we see that we cover the results given by Ayad and Kihel (\cite{AK1}, \cite{AK2}), El Bachraoui (\cite{Ba1}, \cite{Ba2}), Nathanson (\cite{Na}) and Nathanson and Orosz (\cite{NO}).
\end{rem}
\section{Extending the formulas to finite union of arithmetic progressions}

In this section, we will give formulas for $f(X), f_k(X), \Phi(X,n), \Phi_k(X,n)$ when
\begin{align*}
X &= [a_1,b_1]\cup[a_2,b_2]\cup\ldots\cup[a_\ell,b_\ell]\quad\text{or}\\ 
X &= \{a_1, a_1+b_1,\ldots,a_1+(m_1-1)b_1\}\cup\{a_2, a_2+b_2,\ldots,a_2+(m_2-1)b_2\}\\
&\quad\cup\ldots\cup\{a_\ell, a_\ell+b_\ell,\ldots,a_\ell+(m_\ell-1)b_\ell\}.
\end{align*}
\indent Considering our method carefully, we see that it can be applied in any situation where the number of elements divisible by a fixed positive integer can be calculated. We illustrate this idea explicitly below.\\
Let $X$ be a nonempty finite subset of integers and for each $d$, let $X_d = \{x\in X\;:\;d\mid x\}$. 
By applying Lemma \ref{lem6mu} and changing the order of summation, we have
\begin{align}\label{eq1sec5exten}
\Phi_k(X,n) &= \sum_{\substack{\emptyset \neq A \subseteq X\\|A| = k\\\gcd(A,n)=1}} 1 = \sum_{\substack{\emptyset \neq A \subseteq X\\|A| = k}}\sum_{d\mid \gcd(A,n)} \mu(d)\notag\\
&=\sum_{d\mid n}\mu(d)\sum_{\substack{\emptyset \neq A \subseteq X\\|A| = k\\d\mid \gcd(A)}} 1 \notag\\
&= \sum_{d\mid n}\mu(d)\sum_{\substack{\emptyset \neq A \subseteq X_d\\|A| = k}} 1\notag\\
& =\sum_{d\mid n}\mu(d){|X_d|\choose k}.
\end{align}
Summing over all $k$, we see that
\begin{equation}\label{eq2sec5exten}
\Phi(X,n) = \sum_{k=1}^{|X|}\Phi_k(X,n) = \sum_{d\mid n}\mu(d)\sum_{k=1}^{|X|}{|X_d|\choose k} = \sum_{d\mid n}\mu(d)\left(2^{|X_d|}-1\right).
\end{equation}
Again, applying Lemma \ref{lem6mu} and changing the order of summation, we have
\begin{align}\label{eq3sec5exten}
f_k(X)  &=\sum_{\substack{\emptyset \neq A \subseteq X\\|A| = k}} \sum_{d\mid \gcd(A)}\mu(d) = \sum_{d=1}^{\max X}\mu(d)\sum_{\substack{\emptyset \neq A \subseteq X\\|A| = k\\d\mid\gcd (A)}}1\notag\\
&= \sum_{d=1}^{\max X}\mu(d)\sum_{\substack{\emptyset \neq A \subseteq X_d\\|A| = k}} 1 = \sum_{d=1}^{\max X}\mu(d){|X_d|\choose k}.
\end{align}
Summing $f_k(X)$ over all $k$, we obtain
\begin{equation}\label{eq4sec5exten}
f(X) = \sum_{d=1}^{\max X}\mu(d)\left(2^{|X_d|}-1\right)
\end{equation}
\begin{rem}
1) If $n>1$, by Lemma \ref{lem6mu}, the formula in (\ref{eq2sec5exten}) can be reduced to
$$
\Phi(X,n) = \sum_{d\mid n}\mu(d)2^{|X_d|}.
$$ 
2) From (\ref{eq1sec5exten}), (\ref{eq2sec5exten}), (\ref{eq3sec5exten}), and (\ref{eq4sec5exten}), we see that explicit formulas for $\Phi_k(X,n), \Phi(X,n),$ $f_k(X)$ and $f(X)$ can be obtained whenever we can compute $|X_d|$ for all $d$.
\end{rem}
With equations (\ref{eq1sec5exten}), (\ref{eq2sec5exten}), (\ref{eq3sec5exten}), and (\ref{eq4sec5exten}), we obtain the following theorem.
\begin{thm}\label{thm1sec5extend}
Let $X = [a_1,b_1]\cup[a_2,b_2]\cup\ldots\cup[a_\ell,b_\ell]$ where $a_1\leq b_1<a_2\leq b_2<\ldots<a_\ell\leq b_\ell$. Assume that $[a_i,b_i]\cap[a_j,b_j]=\emptyset$ for $i\neq j$. Then
\begin{itemize}
\item [(i)] $\displaystyle \Phi_k(X,n) = \sum_{d\mid n}\mu(d){\sum_{i=1}^\ell\left\lfloor \frac{b_i}{d}\right\rfloor-\left\lfloor \frac{a_i-1}{d}\right\rfloor\choose k}$
\item [(ii)] $\displaystyle \Phi(X,n) = \sum_{d\mid n}\mu(d)\left(2^{\sum_{i=1}^\ell\left\lfloor \frac{b_i}{d}\right\rfloor-\left\lfloor \frac{a_i-1}{d}\right\rfloor}-1\right)$
\item [(iii)] $\displaystyle f_k(X) = \sum_{d=1}^{b_\ell}\mu(d){\sum_{i=1}^\ell\left\lfloor \frac{b_i}{d}\right\rfloor-\left\lfloor \frac{a_i-1}{d}\right\rfloor\choose k}$
\item [(iv)] $\displaystyle f(X) = \sum_{d=1}^{b_\ell}\mu(d)\left(2^{\sum_{i=1}^\ell\left\lfloor \frac{b_i}{d}\right\rfloor-\left\lfloor \frac{a_i-1}{d}\right\rfloor}-1\right)$
\end{itemize}
\end{thm}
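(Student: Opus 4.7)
The plan is to invoke the general formulas (\ref{eq1sec5exten})--(\ref{eq4sec5exten}) derived in the discussion above, which express $\Phi_k(X,n)$, $\Phi(X,n)$, $f_k(X)$, and $f(X)$ purely in terms of the quantities $|X_d| = |\{x \in X : d \mid x\}|$. Thus the entire content of the theorem reduces to computing $|X_d|$ when $X$ is the given disjoint union of intervals, and then substituting into the four master formulas.

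First I would use the disjointness hypothesis $[a_i,b_i]\cap[a_j,b_j]=\emptyset$ for $i\neq j$ to write
\[
X_d \;=\; \bigsqcup_{i=1}^{\ell} \bigl([a_i,b_i]\cap d\mathbb{Z}\bigr),
\]
so that $|X_d| = \sum_{i=1}^\ell |[a_i,b_i]\cap d\mathbb{Z}|$. Next I would apply the standard elementary fact that the number of multiples of $d$ lying in the interval $[a_i,b_i]$ of positive integers equals $\lfloor b_i/d\rfloor - \lfloor (a_i-1)/d\rfloor$; this is immediate since the multiples of $d$ in $[1,N]$ are counted by $\lfloor N/d\rfloor$, and we subtract the count for $[1,a_i-1]$ from the count for $[1,b_i]$. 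Combining these two observations gives
\[
|X_d| \;=\; \sum_{i=1}^{\ell}\left(\left\lfloor \frac{b_i}{d}\right\rfloor - \left\lfloor \frac{a_i-1}{d}\right\rfloor\right).
\]

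With this expression for $|X_d|$ in hand, parts (i) and (ii) follow at once by substituting into (\ref{eq1sec5exten}) and (\ref{eq2sec5exten}), respectively. For parts (iii) and (iv), I would substitute into (\ref{eq3sec5exten}) and (\ref{eq4sec5exten}), noting only that $\max X = b_\ell$ (since the intervals are arranged in increasing order and $b_\ell$ is the largest endpoint), which produces the summation bound $d = 1, 2, \ldots, b_\ell$ as stated.

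There is essentially no obstacle here: the theorem is a direct corollary of the identities (\ref{eq1sec5exten})--(\ref{eq4sec5exten}) once $|X_d|$ is computed. The only point requiring care is to record explicitly that disjointness of the intervals is what allows $|X_d|$ to split as an ordinary sum (without any inclusion--exclusion correction), and that the hypothesis $a_1\leq b_1 < a_2 \leq b_2 < \cdots < a_\ell\leq b_\ell$ both ensures this disjointness and identifies $\max X = b_\ell$.
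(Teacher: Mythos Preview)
Your proposal is correct and follows essentially the same approach as the paper: compute $|X_d|$ via the elementary count $\lfloor b_i/d\rfloor - \lfloor (a_i-1)/d\rfloor$ on each disjoint interval, then substitute into the master formulas (\ref{eq1sec5exten})--(\ref{eq4sec5exten}). Your write-up even adds a couple of details the paper leaves implicit, namely the explicit use of disjointness to justify the additive splitting of $|X_d|$ and the identification $\max X = b_\ell$ for the summation bound in (iii) and (iv).
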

\begin{proof}
The number of integers $x\in [1,n]$ divisible by $d$ is equal to $\left\lfloor \frac{n}{d}\right\rfloor$. So the number of integers $x\in [a,b]$ such that $d\mid x$ is equal to $\left\lfloor \frac{b}{d}\right\rfloor-\left\lfloor \frac{a-1}{d}\right\rfloor$. This implies that 
$$
|X_d| = \sum_{i=1}^\ell\left\lfloor \frac{b_i}{d}\right\rfloor-\left\lfloor \frac{a_i-1}{d}\right\rfloor.
$$
Substituting this in (\ref{eq1sec5exten}), (\ref{eq2sec5exten}), (\ref{eq3sec5exten}), and (\ref{eq4sec5exten}), we obtain the desired result.
\end{proof}
Note that the formulas in Theorem \ref{thm1sec5extend} are also obtained by El Bachraoui \cite{Ba4}, \cite{Ba5} in a different form but his proof does not seem to be applicable in more general situations such as \cite{Sh1}, \cite{Sh2}. However, our method still works well in this case (see section 5). 
\begin{thm}\label{thm2sec5extend}
Let $\displaystyle X = \bigcup_{i=1}^\ell I_i$ where $I_i = \{a_i,a_i+b_i,\ldots, a_i+(m_i-1)b_i\}$ be an $m_i$-arithmetic progression. Assume that $(a_i,b_i)=1$ for all $i$ and $I_i\cap I_j = \emptyset$ for all $i\neq j$. Then 
\begin{itemize}
\item [(i)] $\displaystyle \Phi_k(X,n) = \sum_{d\mid n}\mu(d){\sum_{i=1}^\ell|I_{id}|\choose k}$
\item [(ii)] $\displaystyle \Phi(X,n) = \sum_{d\mid n}\mu(d)\left(2^{\sum_{i=1}^\ell|I_{id}|}-1\right)$
\item [(iii)] $\displaystyle f_k(X) = \sum_{d=1}^{\max X}\mu(d){\sum_{i=1}^\ell|I_{id}|\choose k}$
\item [(iv)] $\displaystyle f(X) = \sum_{d=1}^{\max X}\mu(d)\left(2^{\sum_{i=1}^\ell|I_{id}|}-1\right)$
\end{itemize}
where for each $i$ and $d$, $|I_{id}|=0$ if $(d,b_i)\neq 1$ and if $(d, b_i)=1$, then $|I_{id}| = \left\lfloor \frac{m_i}{d}\right\rfloor+\varepsilon_{id}$ where
$$
\varepsilon_{id} = 
\begin{cases}
1&\quad\text{if}\;d\nmid m_i\;\text{and}\;-a_ib_i^{-1}\modu d\in\{0,\ldots,m_i-1-\left\lfloor \frac{m_i}{d}\right\rfloor d\},\\
&\quad\text{where $b_i^{-1}$ is the multiplication inverse of $b_i$ modulo $d$.}\\
0&\quad\text{otherwise}.
\end{cases}
$$
\end{thm}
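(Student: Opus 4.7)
The plan is to reduce Theorem \ref{thm2sec5extend} entirely to the general identities (\ref{eq1sec5exten})--(\ref{eq4sec5exten}) derived in Section 4, so that the only genuinely new content is the computation of $|X_d|$ for the specific set $X = \bigcup_{i=1}^\ell I_i$ under consideration. Since these four identities already yield formulas for $\Phi_k(X,n)$, $\Phi(X,n)$, $f_k(X)$, and $f(X)$ in terms of $|X_d|$ for an \emph{arbitrary} nonempty finite set of integers $X$, what remains is purely an arithmetic-progression counting problem that has already been handled in Section 2.

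First, I would exploit the disjointness hypothesis $I_i \cap I_j = \emptyset$ for $i \neq j$. Setting $X_d = \{x \in X : d \mid x\}$ and $I_{id} = \{x \in I_i : d \mid x\}$, disjointness of the $I_i$ forces disjointness of the $I_{id}$, so
\[
X_d \;=\; \bigsqcup_{i=1}^\ell I_{id} \qquad \text{and hence} \qquad |X_d| \;=\; \sum_{i=1}^\ell |I_{id}|.
\]
Substituting this into (\ref{eq1sec5exten}), (\ref{eq2sec5exten}), (\ref{eq3sec5exten}), and (\ref{eq4sec5exten}) immediately gives the four displayed formulas (i)--(iv), with the upper summation index $\max X$ in parts (iii)--(iv) arising directly from (\ref{eq3sec5exten}) and (\ref{eq4sec5exten}).

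Next, to obtain the explicit form of $|I_{id}|$ claimed in the theorem, I would apply Lemma \ref{lem2} to each arithmetic progression $I_i$ individually. The hypothesis $(a_i,b_i) = 1$ is exactly the condition under which Lemma \ref{lem2} applies, and it gives $|I_{id}| = 0$ whenever $(d,b_i) \neq 1$ and $|I_{id}| = \lfloor m_i/d\rfloor + \varepsilon_{id}$ otherwise, where $\varepsilon_{id}$ is the indicator described in the theorem statement. This is literally a restatement of Lemma \ref{lem2} with $a,b,m$ replaced by $a_i,b_i,m_i$ and $X$ by $I_i$, so no further computation is required.

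Since there is no combinatorial or analytic obstacle beyond keeping the indexing straight, the main (very mild) point to watch is the interaction between the two disjointness conditions: the disjointness of the $I_i$ must be used to combine the $|I_{id}|$ additively, while the coprimality conditions $(a_i,b_i) = 1$ must be verified \emph{per progression} before invoking Lemma \ref{lem2}. Once both are in place, the proof is a one-line substitution into the master identities (\ref{eq1sec5exten})--(\ref{eq4sec5exten}), and I expect the write-up to be essentially a two-step display: first $|X_d| = \sum_i |I_{id}|$, then the formula for $|I_{id}|$ from Lemma \ref{lem2}.
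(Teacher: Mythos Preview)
Your proposal is correct and follows exactly the paper's approach: use disjointness to write $|X_d| = \sum_{i=1}^\ell |I_{id}|$, apply Lemma~\ref{lem2} to each $I_i$ under the hypothesis $(a_i,b_i)=1$, and substitute into (\ref{eq1sec5exten})--(\ref{eq4sec5exten}). The paper's own proof is in fact even terser (two sentences), but the content is identical.
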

\begin{proof}
We have $|X_d| = \displaystyle \sum_{i=1}^\ell|I_{id}|$ and $|I_{id}|$ can be obtained by Lemma \ref{lem2}. This completes the proof.
\end{proof}
\begin{rem}
1) If $(a_i,b_i)>1$ for some $i$, we can apply Lemma \ref{newlemma2} to obtain the corresponding result to Theorem \ref{thm2sec5extend}.\\
2) Theorem \ref{thm2sec5extend} extends Ayad and Kihel's results \cite{AK1}, \cite{AK2} to the case of finite union of arithmetic progressions. Replacing $\ell=1$ and $X=\{a,a+b,\ldots,a+(m-1)b\}$, we obtain their result in \cite{AK1} and \cite{AK2}. 
\end{rem}
\section{Cover Shonhiwa's theorems}
Shonhiwa considers the case $X=[1,n]$ with various constraints. He \cite{Sh1}, \cite{Sh2} uses the M$\ddot{\text{o}}$bius inversion formula, the inclusion-exclusion principle, generating functions, and standard formulas in enumerative combinatorics. In this section, we illustrate again how our method can be used to obtain Shonhiwa's results in a faster and simpler way. So let us recall his theorems in \cite{Sh1}, \cite{Sh2}.
\begin{thm}
(\cite{Sh1}, \cite{Sh2}) Let
\begin{align*}
(i)\; S_k^m(n)\;&=\;\sum_{\substack{1\leq a_1,a_2,\ldots, a_k\leq n\\(a_1,a_2,\ldots,a_k,m)=1}}1;\quad\forall n\geq k\geq 1, m\geq 1\\
(ii)\; G_k(n)\;&=\;\sum_{\substack{1\leq a_1,a_2,\ldots, a_k\leq n\\(a_1,a_2,\ldots,a_k)=1}}1;\quad\forall n\geq k\geq 1\\
(iii)\; L_k^m(n)\;&=\;\sum_{\substack{1\leq a_1\leq a_2\leq \ldots\leq a_k\leq n\\(a_1,a_2,\ldots,a_k,m)=1}}1;\quad\forall n\geq k\geq 1, m\geq 1\\
(iv)\; H_k(n)\;&=\;\sum_{\substack{1\leq a_1\leq a_2\leq \ldots\leq a_k\leq n\\(a_1,a_2,\ldots,a_k)=1}}1;\quad\forall n\geq k\geq 1\\
(v)\; T_k^m(n)\;&=\;\sum_{\substack{1\leq a_1< a_2< \ldots< a_k\leq n\\(a_1,a_2,\ldots,a_k,m)=1}}1;\quad\forall n\geq k\geq 1, m\geq 1
\end{align*}
Then
\begin{align*}
S_k^m(n)\;&=\;\sum_{d\mid m}\mu(d)\left\lfloor \frac{n}{d}\right\rfloor^k,\\
G_k(n)\;&=\;\sum_{d\leq n}\mu(d)\left\lfloor \frac{n}{d}\right\rfloor^k,\\
L_k^m(n)\;&=\;\sum_{d\mid m}\mu(d){\left\lfloor \frac{n}{d}\right\rfloor+k-1\choose k}\\
H_k(n)\;&=\;\sum_{d\leq n}\mu(d){\left\lfloor \frac{n}{d}\right\rfloor+k-1\choose k}\\
T_k^m(n)\;&=\;\sum_{d\mid m}\mu(d){\left\lfloor \frac{n}{d}\right\rfloor\choose k}.
\end{align*}
\end{thm}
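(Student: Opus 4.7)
The plan is to treat all five formulas uniformly by the same swap-of-summation technique that the paper has already used for $\Phi_k(X,n)$ and $f_k(X)$: replace each indicator $[\gcd(\cdot)=1]$ by $\sum_{d\mid\gcd(\cdot)}\mu(d)$ via Lemma \ref{lem6mu}, interchange the order of summation so that $d$ is outside, and then for each $d$ count the number of $k$-tuples (ordered, non-decreasing, or strictly increasing) whose entries all lie in the set $M_d:=\{x\in[1,n]:d\mid x\}$, which has cardinality $\lfloor n/d\rfloor$.

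For part (i), I would write
$$
S_k^m(n)=\sum_{1\le a_1,\dots,a_k\le n}\sum_{d\mid (a_1,\dots,a_k,m)}\mu(d).
$$
Since $d$ must divide $m$, the outer variable after interchange runs over $d\mid m$; the inner sum counts ordered $k$-tuples from $M_d$, giving $\lfloor n/d\rfloor^{k}$. Part (ii) is identical except that the divisibility condition no longer involves $m$, so $d$ is only constrained by $d\mid a_i\le n$; the inner sum is $0$ whenever $d>n$, so the outer range collapses to $d\le n$ and we obtain the formula for $G_k(n)$.

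Parts (iii) and (iv) differ from (i) and (ii) only in that we count non-decreasing $k$-tuples with entries in $M_d$. This is the standard multiset-count $\binom{|M_d|+k-1}{k}=\binom{\lfloor n/d\rfloor+k-1}{k}$, and the same outer-range considerations (divisors of $m$ vs.\ all $d\le n$) deliver $L_k^m(n)$ and $H_k(n)$. Part (v) is identical to (iii) but with strictly increasing tuples, whose count is the binomial coefficient $\binom{|M_d|}{k}=\binom{\lfloor n/d\rfloor}{k}$, producing $T_k^m(n)$.

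There is no real obstacle: the whole argument rests on (a) Lemma \ref{lem6mu}, (b) the trivial identity $|M_d|=\lfloor n/d\rfloor$, and (c) the three elementary enumeration facts that the number of ordered, multiset, and strict $k$-selections from an $N$-element set are $N^k$, $\binom{N+k-1}{k}$, and $\binom{N}{k}$ respectively. The only point worth flagging is the outer range in (ii) and (iv): one must observe that for $d>n$ the set $M_d$ is empty, so extending the sum to all $d\ge 1$ or truncating it at $d\le n$ gives the same value, which is what makes the shift from ``$d\mid m$'' to ``$d\le n$'' legitimate when the gcd condition omits $m$.
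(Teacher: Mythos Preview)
Your proposal is correct and follows essentially the same approach as the paper: apply Lemma~\ref{lem6mu}, swap the order of summation, and use the three standard enumeration counts $N^k$, $\binom{N+k-1}{k}$, $\binom{N}{k}$ with $N=|M_d|=\lfloor n/d\rfloor$. The only cosmetic difference is that for (ii) and (iv) the paper obtains the range $d\le n$ by substituting $m=n!$ into the formulas already proved for (i) and (iii), whereas you reach the same conclusion directly by noting that $M_d=\emptyset$ for $d>n$; the two arguments are equivalent.
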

\begin{proof}
Throughout the proof, we let 
\begin{align*}
A &= \{1, 2,\ldots, n\}\quad\text{and} \\
A_d &= \{a\in A\;:\;d\mid a\} = \{d, 2d, 3d,\ldots, \left\lfloor \frac{n}{d}\right\rfloor d\}.
\end{align*}
For (i), we apply Lemma \ref{lem6mu} and change the order of summation to obtain 
\begin{align*}
S_k^m(n)\;&=\;\sum_{1\leq a_1,a_2,\ldots, a_k\leq n}\sum_{d\mid (a_1,a_2,\ldots, a_k, m)} \mu(d)\\
&=\;\sum_{d\mid m}\mu(d)\sum_{\substack{1\leq a_1,a_2,\ldots, a_k\leq n\\d\mid a_1, d\mid a_2,\ldots, d\mid a_k}}1.
\end{align*}
The condition $1\leq a\leq n$ and $d\mid a$ is the same as $a\in A_d$. Therefore the number of choices for $a$ is $|A_d| = \left\lfloor \frac{n}{d}\right\rfloor$. So we have
$$
S_k^m(n)\;=\;\sum_{d\mid m}\mu(d)\sum_{a_1,a_2,\ldots, a_k\in A_d}1\;=\;\sum_{d\mid m}\mu(d)|A_d|^k\;=\;\sum_{d\mid m}\mu(d)\left\lfloor \frac{n}{d}\right\rfloor^k.
$$ 
\noindent For (ii) we put $m = n!$ in $S_k^m(n)$ and argue as in the proof of $\Phi^{(a,b)}(m,n)$ in the previous section. We see that 
$$
G_k(n)\;=\;S_k^m(n)\;=\;\sum_{d\mid m}\mu(d)\left\lfloor \frac{n}{d}\right\rfloor^k\;=\;\sum_{d\mid n!}\mu(d)\left\lfloor \frac{n}{d}\right\rfloor^k\;=\;\sum_{d=1}^n\mu(d)\left\lfloor \frac{n}{d}\right\rfloor^k.
$$
Before giving the proof of (iii), let us recall an elementary formula in enumeration. The number of ways to select $k$ objects from $n$ different objects with repetition allowed is equal to (\cite{CK}, p.47)
\begin{equation}\label{waystoselect}
{k+n-1\choose k}.
\end{equation}
Now similar to (i), we apply Lemma \ref{lem6mu} and change the order of summation to obtain 
$$
L_k^m(n) = \sum_{d\mid m}\mu(d)\sum_{\substack{1\leq a_1\leq a_2\leq \ldots\leq a_k\leq n\\d\mid a_1, d\mid a_2,\ldots, d\mid a_k}}1 = \sum_{d\mid m}\mu(d)\sum_{\substack{a_1,a_2, \ldots, a_k\in A_d\\a_1\leq a_2\leq \ldots\leq a_k}}1.
$$
The inner sum is equal to the number of ways to select $k$ objects from $|A_d| = \left\lfloor \frac{n}{d}\right\rfloor$ different objects with repetition allowed. So it is equal to ${k+\left\lfloor \frac{n}{d}\right\rfloor-1\choose k}$ by (\ref{waystoselect}). Thus 
\begin{equation}\label{eq8lkm}
L_k^m(n) = \sum_{d\mid m}\mu(d){k+\left\lfloor \frac{n}{d}\right\rfloor-1\choose k}.
\end{equation}
\noindent For (iv), we put $m=n!$ in (\ref{eq8lkm}) and argue as before to get $H_k(n)$.\\
\noindent Similar to (i) and (iii), we have
$$
T_k^m(n) = \sum_{d\mid m}\mu(d)\sum_{\substack{1\leq a_1< a_2<\ldots< a_k\leq n\\d\mid a_1, d\mid a_2,\ldots, d\mid a_k}}1= \sum_{d\mid m}\mu(d)\sum_{\substack{a_1,a_2, \ldots, a_k\in A_d\\a_1<a_2<\ldots< a_k}}1.
$$
The inner sum is the same as $k$-combinations of $|A_d| = \left\lfloor \frac{n}{d}\right\rfloor$ distinct objects, which equals ${\left\lfloor \frac{n}{d}\right\rfloor\choose k}$. This gives (v). Hence the proof is complete.
\end{proof}
\noindent\textbf{Possible research questions}\vspace{0.2cm}\\

There are some interesting problems that can be investigated further. For example, combinatorial identities given by Bachraoui and Salim in \cite{Ba4}, divisor-type functions by Toth \cite{To}, arithmetic properties of the sequence $f(n)=f([1,n])$, and $\Phi(n) = \Phi([1,n])$ by Ayad and Kihel \cite{AK3} and Min Tang \cite{Ta} can be developed more. The possible research questions are the following:
\begin{itemize}
\item [(i)] Is $f(n)$ a cube, a fourth power, or a perfect power for some $n \geq 2$? (Ayad and Kihel \cite{AK3} proved that $f(n)$ is never a square for $n\geq 2$.)
\item [(ii)] What are congruence relations for $f(n), f_k(n), \Phi(n)$, and $\Phi_k(n)$? (Ayad and Kihel \cite{AK3} showed that $\Phi(n) \equiv 0(\modu 3)$ for $n\geq 3$.)
\item [(iii)] Does the sequence $f(n)$ contain infinitely many primes?
\item [(iv)] What are the solutions of $f(n) = \Phi(m)$ ?
\item [(v)] What are the properties of the divisor sums $\sum_{d\mid n}f(d)$ and $\sum_{d\mid n}\Phi(d)$? Is there a combinatorial interpretation associated with these sums?
\item [(vi)] How closed are the sequences $f(n)$, $\Phi(n)$, $\sum_{d\mid n}f(d)$, and $\sum_{d\mid n}\Phi(d)$ to the sequence $2^n$? Which one is nearer to $2^n$ than the others? How closed are the sequence $\sum_{n\leq N}f(n)$, $\sum_{n\leq N}\Phi(n)$, $\sum_{n\leq N}\sum_{d\mid n}f(d)$, and $\sum_{n\leq N}\sum_{d\mid n}\Phi(d)$ to the expected value $2^{N+1}?$ Which one is nearer to $2^{N+1}$ than the others?
\end{itemize}

\noindent \textbf{Acknowledgement}: The author's research is financially supported by Faculty of Science, Silpakorn University, Thailand, contract number RGP 2555-07. He also wishes to thank the anonymous referee and the editor (Professor Bruce Landman) for careful reading and pointing him some typographical and grammatical errors.

\end{document}